\newcommand{\kk}{\Bbbk}
\def\SL{\operatorname{SL}}
\def\SL2{\operatorname{SL}_{2}(K)}
\def\GL2{\operatorname{GL}_{2}(K)}
\def\INVSL2{$K[V]^{operatorname{SL}_{2}(K)}$}
\def\INVSO2{$K[V]^{operatorname{SO}_{2}(K)}$}
\def\INVGL2{$K[V]^{operatorname{GL}_{2}(K)}$}
\def\depth{\operatorname{depth}}
\def\GL{\operatorname{GL}}
\def\SL{\operatorname{SL}}
\def\Z{\mathbb{Z}}
\def\Tr{\operatorname{Tr}}
\newtheorem{Lemma}{Lemma}
\newtheorem{cor}[Lemma]{Corollary}
\newtheorem{prop}[Lemma]{Proposition}
\newtheorem*{Corollary of Conjecture}{Corollary of Conjecture}
\theoremstyle{definition}
\theoremstyle{remark}
\newtheoremstyle{Acknowledgments}% name
  {}% {\topsep}%      Space above
    {}% {\topsep}%      Space below
     {}%         Body font
     {}%         Indent amount (empty = no indent, \parindent = para indent)
    {\bfseries}% Thm head font
    {}%        Punctuation after thm head
     {.5em}%     Space after thm head: " " = normal interword space;
\theoremstyle{Acknowledgments}
\title[Depth of quotients of invariant rings]{On the depth of quotients of modular invariant rings by transfer ideals}
\author{Jonathan Elmer}
\address{Middlesex University\\
The Burroughs, Hendon, London\\
NW4 4BT UK}
\email{j.elmer@mdx.ac.uk}
\author{M\"{u}fit Sezer}
\address{Bilkent University, Department of Mathematics\\
Cankaya, Ankara \\06800 Turkey } \email{sezer@fen.bilkent.edu.tr}
\thanks{The second author is supported by a grant from T\"UBITAK:115F186 }
\date{\today}
\subjclass[2010]{13A50}
\keywords{Invariant theory, transfer ideal, prime characteristic, depth, regular sequence}
\begin{document}
\maketitle
\begin{abstract}
Let $G$ be a finite group, and $V$ a finite dimensional vector space over a field $\kk$ of characteristic dividing the order of $G$. Let $H \leq G$. The transfer map $\Tr_H^G: \kk[V]^H \rightarrow \kk[V]^G$ is an important feature of modular invariant theory. Its image is called a transfer ideal $I^G_H$ of $\kk[V]^G$, and this ideal, along with the quotients $\kk[V]^G/I^G_H$ are widely studied.

In this article we study $\kk[V]^G/I$, where $I$ is any sum of transfer ideals. Our main result gives an explicit regular sequence of length $\dim(V^G)$ in $\kk[V]^G/I$ when $G$ is a $p$-group. We identify situations where this is sufficient to compute the depth of $\kk[V]^G/I$, in particular recovering a result of Totaro. We also study the cases where $G$ is cyclic or isomorphic to the Klein 4 group in greater detail. In particular we use our results to compute the depth of $\kk[V]^G/I^G_{\{1\}}$ for an arbitrary indecomposable representation of the Klein 4 group.

\end{abstract}
\section{Introduction}

Let $\kk$ be an infinite field and $V$ a
finite-dimensional  $\kk$-vector space, and $G \leq \GL(V)$ a
finite group. Then the induced action  on $V^*$  extends to the
symmetric algebra $\kk[V]:=S(V^*)$ by the formula $$\sigma
(f)=f\circ \sigma^{-1}$$ for $\sigma \in G$ and $f\in \kk[V]$. The
algebra of fixed points $\kk[V]^G$ is called the \emph{ring of
invariants}, and is the central object of study in invariant
theory. 

Now let $H$ be a subgroup of $G$. There is a well-defined $\kk$-linear map
\[\Tr^G_H: \kk[V]^H \rightarrow \kk[V]^G\]
\[f \mapsto \sum_{\sigma \in G/H} \sigma f\] which is called the {\it relative transfer} from $H$ to $G$. If $H$ is trivial we call this the {\it transfer} to $G$ and denote it by $\Tr^G$. 

If $[G:H]$ is nonzero in $\kk$, then it is easy to show that $\Tr^G_H$ is surjective. In particular, $\Tr^G$ is a surjective map if $|G|$ is not divisible by the characteristic of $\kk$.

From now on suppose $\kk$ is a field of positive characteristic $p$ dividing $|G|$. Then if $H$ is a $p$-subgroup of $G$, but not a Sylow-$p$-subgroup, the transfer $\Tr^G_H$ is not surjective and its image is a proper ideal of $\kk[V]^G$. We denote this ideal by $I^G_H$. More generally, to any set $\mathcal{X}$ of subgroups of $G$ we associate the ideal

\begin{equation}\label{transferideal}
I^G_{\mathcal{X}} = \sum_{X \in \mathcal{X}} \Tr^G_X(\kk[V]^X). 
\end{equation} 

Transfer maps and their images are an important feature of modular invariant theory. For instance, a long-standing conjecture of Shank and Wehlau \cite{ShankWehlauTransfer} states that if $G$ is a $p$-group then $\kk[V]^G$ is polynomial if and only if $I^G_{\{1\}}$ is a principal ideal. Other investigations concern the transfer ideal $I:= I^G_{<P}$ where $P$ is a Sylow-$p$-subgroup and $<P$ denotes the set of all proper subgroups of $P$. This can be shown to be independent of the choice of Sylow-$p$-subgroup. Notable are a conjecture of Wehlau et al \cite{QueensReport} stating that $\kk[V]^G/I$ is always generated by invariants of degree $ \leq |G|$, and a recent result of Totaro \cite{Totaro} stating that $\kk[V]^G/I$ is always a Cohen-Macaulay ring. Both of these conform to a general philosophy that $\kk[V]^G/I$ behaves rather like a non-modular ring of invariants.

In the present article we consider quotient rings of the form $R^G_{\mathcal{X}}:= \kk[V]^G/I^G_{\mathcal{X}}$ where $G$ is a $p$-group and $\mathcal{X}$ is any set of proper subgroups of $G$. Our main result (Proposition \ref{regularsequence}) gives an explicit regular sequence of length $\dim(V^G)$ in $R^G_{\mathcal{X}}$. In particular, we recover Totaro's result in the special case of a $p$-group by elementary means. We also give some new results on the depth of $R^G_{\mathcal{X}}$ where $G$ is a cyclic $p$-group, and classify indecomposable representations $V$ of the Klein 4-group $G$ for which $\kk[V]^G/I_{\{1\}}^G$ is Cohen-Macaulay.

 \section{A regular sequence} 
In this section let $\kk$ be a field and $A = \oplus_{i \geq 0}A_i$ a commutative graded $\kk$-algebra with $A_0=\kk$. Let $M$ be a graded $A$-module. An $M$-{\it regular element} is an element $a \in A$ such that the map $\phi_a: M \rightarrow M$ defined by
\[\phi_a(m) = am\] is injective. An $M$-{\it regular sequence} is a sequence $a_1,a_2, \ldots, a_r$ such that $a_1$ is $M$-regular and for all $i=2, \ldots, r$, $a_i$ is $M/(a_1, \ldots, a_{i-1})M$-regular.

An $M$-regular sequence is called {\it maximal} if it cannot be extended to a longer $M$-regular sequence. If $A$ is Noetherian and $M$ is finite, then it can be shown that all maximal $M$-regular sequences have the same length. Define $A_+  = \oplus_{i \geq 1}A_i$.  Then we call the length of a maximal $M$-regular sequence in  $A_+$ the {\it depth} of $M$. The depth of $A$ is the depth of $A$ as a module over itself.  

For further results on depth and regular sequences we recommend \cite{BrunsHerzog}.

Now let $G$ be a $p$-group and adopt the notation introduced in the introduction. Let $v_1, \ldots, v_l$ be a basis of the fixed-point space $V^G$ and extend to a basis $v_1, \ldots, v_n$ of $V$. Let $x_1, x_2, \ldots, x_n$ be the corresponding dual basis of $V^*$. Then $\kk[V]$ is the polynomial ring generated by $x_1, x_2, \ldots, x_n$. For $i=1, \ldots, l$ we define
\begin{equation}\label{norms}
N_i = N^G(x_i) = \prod_{\sigma \in G} \sigma x_i.
\end{equation}

Notice that $N_i$ is a monic polynomial of  degree $|G|$ as a polynomial in $x_i$, and degree zero as a polynomial in $x_j$ for any $j=1, \ldots, i-1,i+1, \ldots, l$. To see the second statement, note that 
\[N_i(v_j) = \prod_{\sigma \in G} (\sigma x_i)v_j = \prod_{\sigma \in G} x_i(\sigma^{-1}v_j) = \prod_{\sigma \in G} x_i(v_j) = 0.\]
Further, let $\mathcal{X}$ be a set of proper subgroups of $G$. Given $f \in \kk[V]^G$, we denote by $\overline{f}$ the image of $f$ in $R^G_{\mathcal{X}}$. With this notation we have the following which was proven for a cyclic group of prime order in \cite[Theorem~12]{MR3437279}.

\begin{prop}\label{regularsequence}
\[\overline{N_1}, \ldots, \overline{N_l}\] is a regular sequence in $R^G_{\mathcal{X}}$.
\end{prop}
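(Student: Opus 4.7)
The plan is to prove, by induction on $k \in \{1, \ldots, l\}$, that the class of $N_k$ is a non-zero-divisor on the quotient $\kvg/J_k$, where $J_k := (N_1, \ldots, N_{k-1}) \kvg + I^G_\mathcal{X}$. Starting from an equation $N_k f = \sum_{i<k} N_i g_i + t$ with $f, g_i \in \kvg$ and $t \in I^G_\mathcal{X}$, the goal is to conclude $f \in J_k$.

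Two structural facts in the ambient polynomial ring $\kv$ do most of the work. First, $N_1, \ldots, N_l, x_{l+1}, \ldots, x_n$ form a homogeneous system of parameters in $\kv$, since setting $x_{l+1}, \ldots, x_n$ to zero reduces each $N_i$ to $x_i^{|G|}$. As $\kv$ is Cohen--Macaulay, this is a regular sequence; moreover, $\kv$ is a free module over $A := \kk[N_1, \ldots, N_l, x_{l+1}, \ldots, x_n]$ with explicit basis $\{x_1^{a_1} \cdots x_l^{a_l} : 0 \leq a_i < |G|\}$, by iterated Euclidean division using the monicity of each $N_i$ in $x_i$. Second, $I^G_\mathcal{X}$ is contained in the defining ideal $(x_{l+1}, \ldots, x_n)\kv$ of $V^G$: for any proper subgroup $H < G$ and $h \in \kk[V]^H$, evaluation of $\Tr^G_H(h)$ at $v \in V^G$ yields $[G:H] h(v) = 0$ in $\kk$, since $G$ acts trivially on $V^G$ and $p$ divides $[G:H]$.

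Lifting the equation $N_k f = \sum_{i<k} N_i g_i + t$ to $\kv$ and expanding in the free $A$-basis produces, for each multi-index $a$, a scalar equation $N_k f_a = \sum_{i<k} N_i g_{i,a} + t_a$ in $A$. Since $N_1, \ldots, N_k$ is a regular sequence in the polynomial ring $A$, one can solve coordinate-wise and reassemble to a decomposition $f = s + \sum_{i<k} N_i u_i$ in $\kv$, with $N_k s \equiv t \pmod{(N_1, \ldots, N_{k-1}) \kv}$ for some $s, u_i \in \kv$.

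The main obstacle is that this decomposition lives in $\kv$ and not in $\kvg$: the $u_i$ need not be $G$-invariant, and $s$ need not lie in $I^G_\mathcal{X}$. I would overcome this by passing to the associated graded with respect to the $G$-stable $(x_{l+1}, \ldots, x_n)\kv$-adic filtration, on which $G$ acts trivially on the degree-zero piece $\kk[x_1, \ldots, x_l]$ and linearly on $W^* := \operatorname{span}(x_{l+1}, \ldots, x_n)$; norms become $x_i^{|G|}$, and transfers from proper subgroups land in strictly positive filtration, so the regular-sequence claim is immediate there, reducing to the regularity of $x_1^{|G|}, \ldots, x_l^{|G|}$ on a free $\kk[x_1, \ldots, x_l]$-module. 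Lifting back through the filtration degree by degree, using the containment $I^G_\mathcal{X} \subseteq (x_{l+1}, \ldots, x_n)\kv$ to handle the transfer pieces and the $A$-free structure to promote coefficients to $G$-invariants, should produce the desired decomposition of $f$ inside $J_k$.
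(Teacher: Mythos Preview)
Your two structural facts are both correct: $N_1,\ldots,N_l,x_{l+1},\ldots,x_n$ is a homogeneous system of parameters (hence a regular sequence) in $\kv$, and $I^G_{\mathcal X}$ does vanish on $V^G$ because $p\mid [G:H]$ for every proper $H<G$. The decomposition $f=s+\sum_{i<k}N_i u_i$ in $\kv$ also follows as you describe. The gap is in the final paragraph, which you yourself flag with ``should produce'': you know $u_i\notin\kvg$ and $s\notin I^G_{\mathcal X}$ in general, and propose to repair this via the $(x_{l+1},\ldots,x_n)$-adic filtration.

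That filtration argument does not close as stated. The containment $I^G_{\mathcal X}\subseteq \mathfrak m\kv$ (with $\mathfrak m=(x_{l+1},\ldots,x_n)$) controls only filtration level zero: it tells you transfers vanish modulo $\mathfrak m$, but says nothing about the initial forms of $I^G_{\mathcal X}$ in levels $d\ge 1$. To iterate you would need, at each level $d$, to absorb the obstruction into $(N_1,\ldots,N_{k-1})\kvg+I^G_{\mathcal X}$, i.e.\ to know that an invariant lying in $\mathfrak m^d\cap\kvg$ and satisfying the relevant relation is a transfer plus an $N_i$-multiple of an invariant. But $\mathfrak m^d\cap\kvg$ is typically much larger than the part of $I^G_{\mathcal X}$ in filtration $\ge d$, and nothing in your outline manufactures genuine transfer elements. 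The phrase ``promote coefficients to $G$-invariants'' hides the same problem: your ring $A=\kk[N_1,\ldots,N_l,x_{l+1},\ldots,x_n]$ is not $G$-stable (the $x_j$ with $j>l$ are not fixed), so the free $A$-basis decomposition of $\kv$ is only upper-triangular under $G$, and coefficient extraction does not commute with the action.

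The paper avoids this entirely by never leaving the invariant subrings. The observation you are missing is that Euclidean division by $N_i$ in the variable $x_i$ can be performed \emph{inside} $\kk[V]^X$ (and inside $\kvg$): since $v_i\in V^G$, the $G$-action does not increase $x_i$-degree, so writing $g_X=q_XN_i+r_X$ with $\deg_{x_i}r_X<|G|$, uniqueness of quotient and remainder forces $q_X,r_X\in\kk[V]^X$. After successively normalising the $f_i$ and the $r_X$ so that each has $x_j$-degree $<|G|$ for the relevant $j\le l$, a direct comparison of $x_1$-degrees, then $x_2$-degrees, and so on, forces $f_1=\cdots=f_{j-1}=0$; the already-established regularity of $N_j$ on $R^G_{\mathcal X}$ then gives $\overline{f_j}=0$. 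This is more elementary than your route and, crucially, never requires lifting from $\kv$ back to $\kvg$.
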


\begin{proof} We first show that $\overline{N_i}$ is regular on $R^G_{\mathcal{X}}$ for any $i=1, \ldots, l$. Let $i \in \{1, \ldots, l\}$ and suppose there exists $f \in \kk[V]^G$ such that $\overline{f}\overline{N_i} = 0$ in $R^G_{\mathcal{X}}$. In turn this means for each $X \in \mathcal{X}$, there exists a polynomial $g_X$ such that
\begin{equation}
fN_i = \sum_{X \in \mathcal{X}} \Tr^G_X(g_X).
\end{equation}

We view $g_X$ as a polynomial in  $x_i$ with coefficients in $\kk[x_1, \ldots, x_{i-1},x_{i+1},x_n]$.   Since $N_i$ is a monic polynomial in $x_i$, we may divide $g_X$ by $N_i$, finding unique polynomials $q_X$ and $r_X$ such that
\[g_X = q_X N_i + r_X\] where the $x_i$-degree of $r_X$ is $<|G|$.  As $v_i \in V^G$, the action of $G$ on $\kk[V]$ does not increase $x_i$-degrees so the uniqueness of this decomposition implies that $q_X, r_X \in \kk[V]^X$ and so 
\[fN_i = \sum_{X \in \mathcal{X}} \Tr_X^G(q_XN_i + r_X) = \sum_{X \in \mathcal{X}} N_i \Tr_X^G(q_X)+\Tr^G_X(r_X).\]
This can be rearranged as
\[0 = (f-\sum_{X \in \mathcal{X}}\Tr^G_X(q_X))N_i + \sum_{X \in \mathcal{X}}\Tr^G_X(r_X).\]
Since $\deg_{x_i}\Tr^G_X(r_X) < |G|$ for each $X$, comparing the $x_i$-degrees in the above equation gives  
\[f-\sum_{X \in \mathcal{X}}\Tr^G_X(q_X)) = 0,\]
i.e. $f \in I^G_{\mathcal{X}}$, which shows that $\overline{f} =0$ as desired. Therefore, $\overline{N_i}$ is regular on $R^G_{\mathcal{X}}$.

Now assume $1<j \leq l$ and $\overline{N_1}, \ldots, \overline{N_{j-1}}$ is a regular sequence in $R^G_{\mathcal{X}}$. Suppose $\overline{N_j}$ is not regular on $R^G_{\mathcal{X}}/(\overline{N_1}, \ldots, \overline{N_{j-1}})R^G_{\mathcal{X}}$. This means there exist $f_1, \ldots,f_j \in \kk[V]^G$ such that 
\[\overline{f_1}\overline{N_1} + \ldots + \overline{f_j}\overline{N_j} = 0\]
in $R^G_{\mathcal{X}}$. This in turn means there exists, for each  $X \in \mathcal{X}$, an element $g_X \in \kk[V]^X$ such that

\begin{equation}
f_1N_1+ \ldots+ f_jN_j = \sum_{X \in \mathcal{X}} \Tr^G_X(g_X).
\end{equation}

We now divide each $f_i$ ( for $i=2 \ldots j)$ by $N_1$ with remainder, writing
\[f_i = q_{i,1}N_1+r_{i,1}.\]
 We obtain for the right-hand side 
\[f_1N_1+ ((q_{2,1})N_1+r_{2,1})N_2 + ((q_{3,1})N_1+r_{3,1})N_3 + \ldots + ((q_{j,1})N_1+r_{j,1})N_j.\]
By collecting together all terms divisible by $N_1$ to replace $f_1$, and replacing each $f_i$ by $r_{i,1}$, we can assume $f_i$ has degree $< |G|$ as a polynomial in $x_1$. Continuing this process, dividing by each $N_i$ $(i=2, \ldots, l)$ in turn and relabeling if necessary, allows us to assume $f_i$ has degree $<|G|$ in all variables $x_1, \ldots, x_{i-1}$. Further, for each $X \in \mathcal{X}$ we write
 \[g_X = q_{X,1} N_1 + q_{X,2}N_2 + \ldots q_{X,j}N_j + r_X\] where the degree of $q_{X,i}$ is $<|G|$ in each variable $x_1, \ldots, x_{i-1}$ and the degree of $r_X$ is $<|G|$ in each of $x_1, \ldots, x_j$. With further relabeling (bringing terms $\Tr^G_X(q_{X,i})N_i$ to the left hand side) we obtain the expression 

\begin{equation}
f_1N_1+ \ldots+ f_jN_j = \sum_{X \in \mathcal{X}} \Tr^G_X(r_X)
\end{equation}
in which each $f_i$ has degree $<|G|$ as a polynomial in each variable $x_1, \ldots, x_{i-1}$ and each $r_X$ has degree $< |G|$ as a polynomial in each variable $x_1, \ldots, x_j$. 

Now notice that that, as a polynomial in $x_1$, the term $f_1N_1$ has degree $\geq |G|$ unless $f_1=0$. All other terms in the expression have $x_1$-degree $< |G|$. This shows that $f_1=0$. Considering degrees in $x_2, \ldots, x_{j-1}$ now shows that $f_2 = f_3 = \ldots =f_{j-1} = 0$, too. So we're left with 
\[f_jN_j = \sum_{X \in \mathcal{X}} \Tr^G_X(r_X).\]
Since we already know that $N_j$ is regular on $R^G_{\mathcal{X}}$, we must have $f_j = 0$. This completes the proof.

\end{proof}

\section{Upper Bounds}
Proposition \ref{regularsequence} implies that 
\[\depth(R^G_{\mathcal{X}}) \geq \dim(V^G)\]
for any set $\mathcal{X}$ of proper subgroups of the $p$-group $G$. In order to compute the exact depth of $R^G_{\mathcal{X}}$, we need an upper bound for depth. It is well-known that, for any Noetherian $\kk$-algebra $A$, $\depth(A) \leq \dim(A)$, see e.g. \cite[Proposition~1.2.12]{BrunsHerzog}. With this in mind, we want to compute $\dim(R^G_{\mathcal{X}})$. 

As a first step, we note that $I^G_{\mathcal{X}} = I^G_{\overline{\mathcal{X}}}$, where $\overline{\mathcal{X}}$ is obtained from $\mathcal{X}$ by including all subgroups of every $X \in \mathcal{X}$ and their $G$-conjugates. This follows easily from e.g. \cite[Equation~(1)]{FleischmannTransfer}. Now we have

\begin{prop}
\label{dim}
\[\dim(R^G_{\mathcal{X}}) = \max \{\dim(V^Q): Q \not \in \overline{\mathcal{X}}\}.\]
\end{prop}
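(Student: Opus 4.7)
The plan is to pass from the quotient $R^G_{\mathcal{X}}$ to the geometric side on $V$, identify the resulting variety as a union of fixed subspaces, and take the maximum of dimensions. Concretely, since $\kv$ is finite (hence integral) over $\kvg$, Krull dimension is preserved by extension of ideals: for any ideal $J \subseteq \kvg$ one has $\dim(\kvg/J) = \dim(\kv/J\kv)$. Setting $\vv := \mathcal{V}(I^G_{\mathcal{X}}\kv) \subseteq V$, this yields $\dim R^G_{\mathcal{X}} = \dim \vv$.

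The main step is the set-theoretic identification $\vv = \bigcup_{Q \notin \overline{\mathcal{X}}} V^Q$. Because $\overline{\mathcal{X}}$ is closed under taking subgroups, the right-hand side equals $\{v \in V : G_v \notin \overline{\mathcal{X}}\}$, so the content of the claim is: $v \in \vv$ iff $G_v \notin \overline{\mathcal{X}}$. This is proved by regrouping the transfer into a sum over double cosets,
\[\Tr^G_X(g)(v) = \sum_{G_v\sigma X \in G_v\backslash G/X} [G_v : G_v \cap \sigma X\sigma^{-1}]\, g(\sigma^{-1}v),\]
and observing that, as $G_v$ is a $p$-group in characteristic $p$, the coefficient vanishes in $\kk$ unless $G_v \le \sigma X\sigma^{-1}$. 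If $G_v \notin \overline{\mathcal{X}}$, no conjugate of $G_v$ lies in any $X \in \mathcal{X}$, so every transfer is $0$ at $v$ and $v \in \vv$. Conversely, if $G_v \in \overline{\mathcal{X}}$, we invoke the identity $I^G_{\mathcal{X}} = I^G_{\overline{\mathcal{X}}}$ and apply the formula with $X = G_v$: only cosets $\sigma G_v$ with $\sigma \in N_G(G_v)$ contribute, each via $g(\sigma^{-1}v)$ at a distinct $G_v$-fixed point. Choosing $g \in \kk[V]^{G_v}$ with $g(v)=1$ and $g(\sigma^{-1}v)=0$ for the remaining $\sigma$ then yields $\Tr^G_{G_v}(g)(v) = 1$, so $v \notin \vv$.

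Since $\vv$ is a finite union of linear subspaces, its Krull dimension is the maximum of the dimensions of its components, giving $\dim \vv = \max\{\dim V^Q : Q \notin \overline{\mathcal{X}}\}$, as claimed.

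The main obstacle is the construction of the invariant $g$ in the converse half of step two: in characteristic $p$ the Reynolds operator is unavailable, so averaging a lifted polynomial fails. One must instead appeal to the standard fact that over an infinite field, $G_v$-invariants separate distinct $G_v$-orbits on $V$, which in particular distinguishes the $G_v$-fixed points $v$ and $\sigma^{-1}v$ (for $\sigma \in N_G(G_v) \setminus G_v$); a Lagrange-type combination of such separating invariants supplies the required $g$. Everything else is formal bookkeeping once the double coset formula is in hand.
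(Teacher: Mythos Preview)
Your argument is correct. The overall strategy coincides with the paper's: pass to the zero locus of the transfer ideal, identify it as the union $\bigcup_{Q\notin\overline{\mathcal{X}}}V^Q$, and read off the dimension as a maximum. The difference is in how that identification is obtained. The paper simply quotes \cite[Proposition~12.4(iii)]{FleischmannTransfer} for the equality $\mathcal{V}(I^G_{\mathcal{X}})=i_G^*\bigl(\bigcup_{Q\notin\overline{\mathcal{X}}}V^Q\bigr)$ inside $V/G$, whereas you lift to $V$ via the integrality of $\kv$ over $\kvg$ and then prove the set-theoretic equality from scratch with the Mackey/double-coset formula for $\Tr^G_X$. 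So your route is more self-contained (and in particular makes visible exactly where the $p$-group hypothesis enters, through the vanishing of the indices $[G_v:G_v\cap\sigma X\sigma^{-1}]$ in $\kk$), at the cost of some extra length; the paper's route is shorter but relies on an external reference. One small simplification available in your converse step: since every point $\sigma^{-1}v$ with $\sigma\in N_G(G_v)$ is itself $G_v$-fixed, you do not need the general orbit-separation principle --- pick any polynomial $f$ interpolating the prescribed values at these finitely many points and take $g=\prod_{h\in G_v}hf$; then $g(\sigma^{-1}v)=f(\sigma^{-1}v)^{|G_v|}$, which already does the job.
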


\begin{proof} Note that $\kk[V]^G$ can be interpreted as the ring of regular functions on the quotient $V/G$. Then by definition we have 
\[\dim(R^G_{\mathcal{X}}) = \dim(\mathcal{V}(I^G_{\mathcal{X}}))\] where, for an ideal $J$ of $\kk[V]^G$, $\mathcal{V}(J)$ denotes the set of $G$-orbits vanishing on $J$. By \cite[Proposition~12.4(iii)]{FleischmannTransfer} we have
\[\mathcal{V}(I^G_{\mathcal{X}}) = i_G^*(\bigcup_{Q \le G: Q \not \in \overline{\mathcal{X}}} V^Q)\] where $i_G$ is the inclusion map $\kk[V]^G \hookrightarrow \kk[V]$ and $i_G^*: V \rightarrow V/G$ is its dual. As the dimension of an algebraic variety is the largest dimension of an irreducible component, the result follows.
\end{proof}

Now in case $\mathcal{X} = \{X: X < G\}$ we get $$\dim(V^G) \leq \depth(R^G_{\mathcal{X}}) \leq  \dim(R^G_{\mathcal{X}}) = \dim(V^G).$$ Consequently we obtain

\begin{cor}[Totaro]\label{totaro}
$R^G_{<G}$ is a Cohen-Macaulay ring.
\end{cor}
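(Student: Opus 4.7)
The plan is essentially to chain together the three results already in hand: the lower bound on depth from Proposition \ref{regularsequence}, the dimension calculation from Proposition \ref{dim}, and the standard inequality $\depth(A)\le \dim(A)$ valid for any finitely generated graded $\kk$-algebra.

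First I would take $\mathcal{X}=\{X:X<G\}$ and observe that $\overline{\mathcal{X}}=\mathcal{X}$, since $\mathcal{X}$ already contains every proper subgroup of $G$ and is obviously closed under conjugation. Consequently the only subgroup $Q\le G$ with $Q\notin\overline{\mathcal{X}}$ is $Q=G$ itself, and so by Proposition \ref{dim}
\[
\dim(R^G_{<G})=\dim(V^G).
\]

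Next, Proposition \ref{regularsequence} furnishes a regular sequence $\overline{N_1},\ldots,\overline{N_l}\in R^G_{<G}$ of length $l=\dim(V^G)$, so $\depth(R^G_{<G})\ge \dim(V^G)$. Combining this with the general inequality $\depth(R^G_{<G})\le \dim(R^G_{<G})$ (e.g.\ \cite[Proposition~1.2.12]{BrunsHerzog}) gives the sandwich
\[
\dim(V^G)\le \depth(R^G_{<G})\le \dim(R^G_{<G})=\dim(V^G),
\]
forcing equality throughout. Hence $\depth(R^G_{<G})=\dim(R^G_{<G})$, which is the definition of Cohen-Macaulay. There is no real obstacle here: the substance of the argument lives in Propositions \ref{regularsequence} and \ref{dim}, and the corollary is a two-line bookkeeping step that merely assembles them.
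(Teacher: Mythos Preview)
Your proposal is correct and follows exactly the same route as the paper: set $\mathcal{X}=\{X:X<G\}$, observe that Proposition~\ref{dim} then gives $\dim(R^G_{<G})=\dim(V^G)$, and sandwich with the lower bound from Proposition~\ref{regularsequence} and the general inequality $\depth\le\dim$. The paper's proof is literally this one-line chain of inequalities.
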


For stronger upper bounds, we use the following, which is inspired by \cite[Theorem~3.17]{FleischmannCohomology}.

\begin{prop}\label{ub} Let $\mathcal{X}$ be a family of subgroups of $G$. Let $K\le H$ for all $H \in \mathcal{X}$. If $((\bigcap_{H \in \mathcal{X}} I_K^H)\setminus I_K^G)\cap \kk[V]^G\neq \emptyset$, then we have $$\depth(R^G_K)\le \dim(R^G_\mathcal{X}).$$
\end{prop}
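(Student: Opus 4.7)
The plan is to extract from the hypothesis a nonzero element $\bar{f}\in R^G_K$ whose annihilator contains the image of $I^G_{\mathcal{X}}$, and then combine this with the classical inequality $\depth(M)\le \dim(A/\mathfrak{p})$ for any associated prime $\mathfrak{p}$ of a finitely generated $A$-module $M$ (see \cite[Proposition~1.2.13]{BrunsHerzog}).

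By hypothesis we may choose an invariant $f \in (\bigcap_{H \in \mathcal{X}} I_K^H) \setminus I_K^G$; its image $\bar{f}$ in $R^G_K = \kk[V]^G/I^G_K$ is nonzero. The heart of the argument is the inclusion $I^G_{\mathcal{X}}\cdot f\subseteq I^G_K$. To verify it, for each $H \in \mathcal{X}$ write $f = \Tr^H_K(g_H)$ with $g_H \in \kk[V]^K$, and for an arbitrary $k_H \in \kk[V]^H$ combine the projection formula (valid because $f$ is $G$-invariant, hence $H$-invariant) with transitivity of transfer $\Tr^G_H \circ \Tr^H_K = \Tr^G_K$ to obtain
\[\Tr^G_H(k_H)\cdot f \;=\; \Tr^G_H(k_H f) \;=\; \Tr^G_H(\Tr^H_K(k_H g_H)) \;=\; \Tr^G_K(k_H g_H) \;\in\; I^G_K.\]
Since elements of this form generate $I^G_{\mathcal{X}}$, the inclusion follows, so $\bar{f}$ is annihilated by the image of $I^G_{\mathcal{X}}$ in $R^G_K$.

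Consequently every element of $I^G_{\mathcal{X}}$ descends to a zerodivisor on $R^G_K$, so prime avoidance applied to the (finite) union of associated primes of $R^G_K$ yields a $\mathfrak{p} \in \mathrm{Ass}(R^G_K)$ with $\mathfrak{p}\supseteq I^G_{\mathcal{X}}/I^G_K$. The quotient $R^G_K/\mathfrak{p}$ is then a quotient of $\kk[V]^G/I^G_{\mathcal{X}} = R^G_{\mathcal{X}}$, giving $\dim(R^G_K/\mathfrak{p})\le \dim(R^G_{\mathcal{X}})$, and the Bruns--Herzog bound $\depth(R^G_K)\le \dim(R^G_K/\mathfrak{p})$ closes the argument. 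I expect the annihilation identity $I^G_{\mathcal{X}}\cdot f\subseteq I^G_K$ to be the main obstacle; once it is established, the passage to associated primes and the dimension comparison are routine commutative-algebra moves.
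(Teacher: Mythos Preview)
Your proof is correct and follows essentially the same route as the paper: the key annihilation identity $\Tr^G_H(k_H)\cdot f\in I^G_K$ is derived via the projection formula and transitivity of transfer exactly as in the paper (the paper reads the same chain of equalities in the reverse order), and the conclusion is drawn from the same Bruns--Herzog bound on depth via associated primes. Your write-up is in fact slightly more careful than the paper's in two places---you make explicit that a single $f$ lying in the full intersection $\bigcap_{H\in\mathcal{X}} I^H_K$ is needed, and you spell out the prime-avoidance step---but the argument is the same.
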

\begin{proof} Let $H \in \mathcal{X}$ and let $f\in (I_K^H\setminus I_K^G)\cap \kk[V]^G$. Then $f=\Tr^H_K(g)$ for some $g\in \kk[V]^K$. For $h \in\kk[V]^H$  we have $$\Tr^G_K(gh)=\Tr^G_H(\Tr^H_K(gh))=\Tr^G_H(h\Tr^H_K(g))=\Tr^G_H(hf)=f\Tr^G_H(h).$$ This  shows that $I^G_H$ annihilates $f+I^G_K\in \kk[V]^G/I_K^G$. It follows that $I^G_{\mathcal{X}}$ annihilates $f+I^G_K\in \kk[V]^G/I_K^G$.
Therefore $I^G_{\mathcal{X}}$ is contained in one of the associated primes of $\kk[V]^G/I_K^G$ in $\kk[V]^G$. Since the depth of a module is smaller than the coheights of its associated primes, see \cite[Proposition~1.2.13]{BrunsHerzog}, the result follows.
\end{proof}
%reference needed Bruns and Herzog
\section{Cyclic groups}
In this section $G$ denotes a cyclic group of order $p^r$. There are exactly $p^r$ indecomposable $\kk G$-modules $V_1, \dots , V_{p^r}$ and each indecomposable module $V_n$ is afforded by a Jordan block of size $n$.  We fix a generator $\sigma$ of $G$.
We choose a basis $e_1, e_2, \dots , e_n$ for $V_n$ such that the action of $\sigma$  is given by $\sigma (e_i)=e_i+e_{i-1}$ for $2\le i\le n$ and $\sigma (e_1)=e_1$.  Define $\Delta=\sigma-1$. We consider the subgroups $K\subseteq H$
of $G$ generated by $\sigma^{p^a}$ and $\sigma^{p^b}$, respectively. We identify a case when the premise of  Proposition \ref{ub} is attained. 

\begin{Lemma}

Assume that    $\kk [V]$ contains a summand which is  isomorphic to $V_n$ with  $p^a-p^b+1 \le n\le p^a-1$. Then $(I^H_K\setminus I^G_K)\cap \kk [V]^G \neq \emptyset$.
\end{Lemma}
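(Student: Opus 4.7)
The plan is to exhibit an explicit $G$-invariant element of $I^H_K \setminus I^G_K$ built from the given summand $W \cong V_n$ inside $\kk[V]$. The preliminary ingredient is that in characteristic $p$, the transfer maps appearing here become powers of $\Delta = \sigma - 1$: using $\sigma^{p^c} = 1 + \Delta^{p^c}$ in characteristic $p$ and expanding the geometric sums defining the transfers, a direct computation in the commutative subalgebra $\kk[\Delta] \subseteq \kk G$ yields the operator identities
\[ \Tr^H_K = \Delta^{p^a - p^b}, \qquad \Tr^G_K = \Delta^{p^a - 1} \]
on $\kk[V]$ (the $\Tr^H_K$ identity, for instance, follows from the freshman-dream computation $(1+\Delta^{p^b})^{p^{a-b}} = 1 + \Delta^{p^a}$).

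With these identities in hand, I would fix a basis $u_1, \ldots, u_n$ of $W$ such that $\Delta(u_i) = u_{i-1}$ (with $u_0 := 0$), transported from the standard basis of $V_n$. Since $n \leq p^a - 1$, $\Delta^{p^a}$ annihilates all of $W$, so $W \subseteq \kk[V]^K$, and in particular $u_1$ is $G$-invariant. The lower bound $n \geq p^a - p^b + 1$ makes $u_{p^a - p^b + 1}$ a valid element of $W \subseteq \kk[V]^K$, and
\[ \Tr^H_K(u_{p^a - p^b + 1}) = \Delta^{p^a - p^b}(u_{p^a - p^b + 1}) = u_1, \]
placing $u_1 \in \kk[V]^G \cap I^H_K$.

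To show $u_1 \notin I^G_K$, I would use the $\kk G$-linear projection $\pi: \kk[V] \to W$ supplied by the direct summand hypothesis; because $\pi$ is $\kk G$-linear it commutes with $\Delta$. If we had $u_1 = \Tr^G_K(g) = \Delta^{p^a - 1}(g)$ for some $g \in \kk[V]^K$, then applying $\pi$ and writing $\pi(g) = \sum_{i=1}^n c_i u_i$ would give
\[ u_1 = \pi(u_1) = \Delta^{p^a - 1}(\pi(g)) = \sum_{i=1}^{n} c_i\, u_{i - (p^a - 1)}, \]
and every term vanishes since $i \leq n \leq p^a - 1$ forces $i - (p^a - 1) \leq 0$. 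This contradicts $u_1 \neq 0$. The only subtle point is the operator identity $\Tr^H_K = \Delta^{p^a - p^b}$, which is characteristic-$p$-specific; once that is in place the rest is a clean use of the $\kk G$-linearity of the splitting $\kk[V] \twoheadrightarrow W$.
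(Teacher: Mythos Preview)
Your argument is correct and follows essentially the same route as the paper: both compute the operator identities $\Tr^H_K=\Delta^{p^a-p^b}$ and $\Tr^G_K=\Delta^{p^a-1}$ in characteristic~$p$, and both exhibit the socle element $e_1$ (your $u_1$) of the summand as the desired witness. Your use of the $\kk G$-linear projection $\pi:\kk[V]\to W$ to rule out $u_1\in I^G_K$ is in fact a bit more explicit than the paper's one-line justification ``because $\Delta^{p^a-1}(V_n)=0$,'' which on its face only addresses preimages lying in $V_n$ rather than all of $\kk[V]^K$; the projection step is exactly what makes that implication rigorous.
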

\begin{proof}
Clearly, $e_1\in V_n^G$. We demonstrate that $e_1\in (I^H_K\setminus I^G_K)$. Note that $\Tr^H_K$ applies to $K$-invariants and a basis element $e_i$ is in $V_n^K$ if and only if  $\sigma^{p^a}(e_i)=e_i$, or equivalently $\Delta^{p^a}(e_i)=0$. It follows that $V_n^K=V_n$ since $n\le p^a-1$.  
The set $1, \sigma^{p^b}, \sigma^{2p^b}, \dots ,\sigma^{p^a-p^b}$ is a complete set of representatives for $H/K$. Therefore we have
\begin{alignat*}{2} \Tr^H_K=&(1+\sigma^{p^b}+\sigma^{2p^b}+ \cdots +\sigma^{p^a-p^b})\\
=&(1+\sigma+\sigma^2+ \cdots +\sigma^{p^{a-b}-1})^{p^b}\\
=&(\Delta^{p^{a-b}-1})^{p^b}=\Delta^{p^a-p^b}.
\end{alignat*}
 Then by assumption on $n$, we get $e_1\in I^H_K $ because $\Tr^H_K(e_{p^a-p^b+1})=e_1$ and $e_{p^a-p^b+1}\in V_n^K$. Similarly, $1, \sigma, \dots , \sigma^{p^a-1}$ is a complete set of representatives for $G/K$ and so  $\Tr^G_K=(1+\sigma+\sigma^2+ \cdots +\sigma^{p^a-1})=\Delta^{p^a-1}$. It follows that $e_1\notin I^G_K$ because $\Delta^{p^a-1}(V_n)=0$ by assumption on $n$.
\end{proof}
 
For subgroups  $K\subseteq H$ in $G$ we have the decomposition  $\Tr^G_{K}=\Tr^G_{H}  \circ\Tr^{H}_{K}$, so $I^G_{K}\subseteq I^G_{H}$. It follows that for a collection
$\mathcal{X}$ of subgroups, we have $I^G_{\mathcal{X}}=I^G_{H}$, where $H$ is the largest subgroup in $\mathcal{X}$. So we consider just subgroups of $G$, rather than collections of subgroups. 

\begin{prop}
Let $K$ be a subgroup of $G$, generated by $\sigma^{p^a}$. If there is $0< b \le a$ such that there is an indecomposable summand $V_n$ of $\kk [V]$ with 
$p^a-p^b+1 \le n\le p^a-1$, then $$\depth(R^G_K)\le \dim(V^{\sigma^{p^{b-1}}}) \le p^{b-1}\dim(V^G).$$
\end{prop}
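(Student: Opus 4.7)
The plan is to apply Proposition \ref{ub} to the singleton family $\mathcal{X} = \{H\}$ with $H := \langle \sigma^{p^b}\rangle$, identify the resulting $\dim R^G_{\{H\}}$ via Proposition \ref{dim}, and then bound that dimension by $p^{b-1}\dim V^G$ using the Jordan block structure of the indecomposable summands of $V$.

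First I will verify the premise of Proposition \ref{ub}: since $b \le a$ we have $K \subseteq H \subseteq G$, and the hypothesis on the summand $V_n$ is precisely the hypothesis of the preceding Lemma, so the Lemma supplies an element of $(I_K^H \setminus I_K^G) \cap \kk[V]^G$. With $\mathcal{X} = \{H\}$ one has $\bigcap_{X \in \mathcal{X}} I_K^X = I_K^H$, so Proposition \ref{ub} yields $\depth(R^G_K) \le \dim(R^G_{\{H\}})$. To evaluate the right-hand side I invoke Proposition \ref{dim}. Since $G$ is abelian, $\overline{\{H\}}$ is simply the set of subgroups of $H$; writing $H_i := \langle \sigma^{p^i}\rangle$, the subgroup lattice of $G$ is the chain $G = H_0 \supsetneq H_1 \supsetneq \cdots \supsetneq H_r = \{1\}$, so the subgroups $Q \notin \overline{\{H\}}$ are exactly $H_0, \ldots, H_{b-1}$. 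The maximum fixed-space dimension among these is attained at the inclusion-smallest one, namely $H_{b-1}$, giving $\dim R^G_{\{H\}} = \dim V^{\sigma^{p^{b-1}}}$, which is the first claimed inequality.

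For the second inequality I will use the characteristic-$p$ identity $\sigma^{p^{b-1}} = (1+\Delta)^{p^{b-1}} = 1+\Delta^{p^{b-1}}$, which identifies $V^{\sigma^{p^{b-1}}}$ with $\ker \Delta^{p^{b-1}}$. On each indecomposable summand $V_n$ the operator $\Delta$ restricts to a single nilpotent Jordan block of size $n$, so $\ker \Delta^{p^{b-1}} \cap V_n$ has dimension $\min(n,p^{b-1}) \le p^{b-1}$, while each summand contributes exactly $1$ to $\dim V^G$. Summing over the indecomposable summands yields the bound.

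No step presents a substantial obstacle; the plan amounts to an assembly of three already-established results. The only point requiring a moment of attention is the dimension computation: one must recognise that in the totally ordered subgroup lattice of the cyclic group $G$, the maximum of $\dim V^Q$ over $Q \not\subseteq H$ is attained at the immediate predecessor $H_{b-1}$ of $H$ in that chain.
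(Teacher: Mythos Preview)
Your proposal is correct and follows essentially the same route as the paper: apply the preceding Lemma to verify the hypothesis of Proposition~\ref{ub} with $\mathcal{X}=\{H\}$ for $H=\langle\sigma^{p^{b}}\rangle$, then use Proposition~\ref{dim} together with the totally ordered subgroup lattice of the cyclic group to identify $\dim R^G_{\{H\}}=\dim V^{\sigma^{p^{b-1}}}$, and finally bound this by $p^{b-1}\dim V^G$ via the Jordan block description of each indecomposable summand of $V$. Your justification for the last step, via $\sigma^{p^{b-1}}=1+\Delta^{p^{b-1}}$ and the kernel dimension on a single Jordan block, is exactly the computation the paper carries out in coordinates.
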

\begin{proof}
Let $H$ denote the subgroup of $G$ generated by $\sigma^{p^{b}}$. Then from the  previous lemma we have that $(I^H_K\setminus I^G_K)\cap \kk [V]^G   \neq \emptyset$. So Proposition \ref{ub} applies for  $\mathcal{X}=\{H\}$ and we get that 
$\depth(R^G_K)\le \dim (R^G/I^G_H)$. On the other hand $\dim (R^G/I^G_H)=\max \{\dim(V^Q): H\subsetneqq Q \}$ by Proposition \ref{dim}. Since $G$ is cyclic, we have that $\max \{\dim(V^Q): H\subsetneqq Q \}=\dim(V^{\sigma^{p^{b-1}}})$ as desired. For the second inequality, note that for each summand $V_m$ of $V$ with basis $e_1, e_2, \dots ,e_m$, $V_m^G$ is spanned by $e_1$ and $V_m^{\sigma^{p^{b-1}}}$ is spanned by $e_1, \dots , e_k$, where $k=\min (p^{b-1}, m)$.
\end{proof}
 
We note an application for cyclic $2$-groups.
\begin{cor}
Let $G$ be a cyclic $2$-group of order $2^r$ and let $V=V_t$ be the indecomposable module of dimension $t\le 2^r$, where $t$ is odd. Then $\depth (R^G_{\{1\}})=\dim (V^G)$.   
\end{cor}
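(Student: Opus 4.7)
The plan is to sandwich $\depth(R^G_{\{1\}})$ between the $\dim(V^G)$ bounds supplied by Proposition~\ref{regularsequence} and the proposition immediately preceding this corollary. Since $V = V_t$ is indecomposable, $V^G$ is the one-dimensional span of $e_1$, so Proposition~\ref{regularsequence} furnishes the single regular element $\overline{N_1}$ and gives the lower bound $\depth(R^G_{\{1\}}) \ge \dim(V^G) = 1$.

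For the matching upper bound, I would apply the preceding proposition with $K = \{1\}$ (so that $a = r$) and $b = 1$. Its conclusion then reads $\depth(R^G_{\{1\}}) \le \dim(V^{\sigma^{2^{0}}}) = \dim(V^{\sigma}) = \dim(V^G)$, which together with the lower bound gives the desired equality. The hypothesis of the preceding proposition at $b = 1$ requires an indecomposable $\kk G$-summand $V_n$ of $\kk[V]$ with $2^{r}-2+1 \le n \le 2^{r}-1$, that is, $n = 2^r - 1$.

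Since $t$ is odd and $t \le 2^{r}$, necessarily $t \le 2^{r} - 1$. In the extremal case $t = 2^{r} - 1$ the dual module $V^{*} \cong V_t = V_{2^{r}-1}$ itself appears as a summand of $\kk[V]$, namely as the degree-one component, so the preceding proposition applies directly and we are done. For the remaining smaller odd values of $t$, the required $V_{2^{r}-1}$ summand does not live in degree one but must be extracted from a higher symmetric power $\Sym^{d} V^{*}$; the natural candidate is a cyclic submodule generated by an element $f$ (for instance a product of the form $x_{1} x_{j}$) whose $\Delta$-nilpotency index is exactly $2^{r}-1$. The main technical obstacle is constructing such an $f$ and verifying that the cyclic $G$-submodule $\kk G \cdot f$ splits off as a direct summand of $\kk[V]$, since direct summands are more restrictive than subquotients and depend on the effective order of $\sigma$ on the symmetric powers of $V_{t}$.
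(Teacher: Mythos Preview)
Your overall strategy is exactly the one the paper uses: the lower bound comes from Proposition~\ref{regularsequence}, and the upper bound comes from the preceding proposition with $K=\{1\}$, $a=r$, $b=1$, once one knows that $V_{2^r-1}$ occurs as a direct summand of $\kk[V_t]$. Your treatment of the case $t=2^r-1$ via the degree-one part $V^\ast\cong V_t$ is fine.

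The gap you yourself flag is the entire content of the corollary, however, and your proposed attack on it does not go through as stated. Producing an element $f$ with $\Delta$-nilpotency index $2^r-1$ only gives a \emph{submodule} isomorphic to $V_{2^r-1}$, and over $\kk G$ (a non-semisimple local ring) such a submodule need not split off; your suggested candidates of the form $x_1x_j$ will in general have $\Delta$-nilpotency index far below $2^r-1$ when $t$ is small, so even the submodule is not available in low degree. There is no elementary way to manufacture the required summand by hand for arbitrary odd $t$.

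The paper sidesteps this by quoting a structural result of Symonds (\cite[Corollary~3.12]{SymondsCyclic}): modulo projective summands one has $S^d(V_t)\cong \Omega^{-d}\bigl(\wedge^d(V_{2^r-t})\bigr)$. Taking $d=2^r-t$, the right-hand side becomes $\Omega^{-d}(V_1)$, and since $d$ is odd (as $t$ is odd) this equals $V_{2^r-1}$. Thus $V_{2^r-1}$ is a genuine direct summand of the degree-$d$ component of $\kk[V_t]$, and the preceding proposition applies. In other words, the missing ingredient is not an ad hoc construction but an appeal to the known decomposition of symmetric powers for cyclic $p$-groups; without it your argument is complete only for $t=2^r-1$.
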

\begin{proof}
Note that the $d$-th symmetric power $S^d(V_t)$ is isomorphic to $\Omega^{-d} (\wedge^d(V_{2^r-t}))$ modulo induced modules, by \cite[Corollary~3.12]{SymondsCyclic}. Putting $d=2^r-t$ we get  $\wedge^d(V_{2^r-t}) = V_1$ and as $d$ is odd we have $\Omega^{-d}(V_1) = V_{2^r-1}$. Therefore, $V_{2^r-1}$ is a summand of the $d$-th homogeneous component of $\kk [V_t]$. So applying the previous proposition for $b=1$ and trivial $K$ yields  $\depth (R^G_{\{1\}})\le \dim (V^G)$. The reverse inequality follows from Proposition \ref{regularsequence}.
\end{proof}
\section{The Klein 4-group}

In this section we consider the action of the group $G = \langle \sigma_1,\sigma_2 \rangle \cong C_2 \times C_2$ on an indecomposable $\kk G$-module $V$, where $\kk$ is a field of characteristic 2. For each $V$, the depth of $\kk[V]^G$ was computed in \cite{ElmerFleischmann}. Further, if $\mathcal{X}$ contains all three proper subgroups of $G$, $R^G_{\mathcal{X}}$ is Cohen-Macaulay by Corollary \ref{totaro}. We will use the results of previous sections to compute $\depth(R^G_{\{1\}})$, for each indecomposable representation $V$ of $G$. Note that the results of Section 3 imply
\begin{equation}\label{ineq}
    \dim(V^G) \leq \depth(R^G_{\{1\}}) \leq \dim(R^G_{\{1\}})=\max_{1<Q<G} \dim(V^Q)
\end{equation}
and $R^G_{\{1\}}$ is Cohen-Macaulay if and only if the second inequality is an equality.

The classification of indecomposable modules for this group is given in \cite{Benson1}, and it is from this source we obtain our notation. To summarize, if $V$ is an indecomposable $\kk G$-module then $V$ is isomorphic to a module in the following list:

\begin{itemize}
    \item An even-dimensional representation of dimension $n$, denoted $V_{n, \lambda}$, where $\lambda \in \kk \cup \{\infty\}$;
    \item The representations $\Omega^m(\kk)$ where $m \in \Z$ and $\Omega$ represents the Heller shift operator. This representation has dimension $n = 2|m|+1$.
    \item The regular representation $\kk G$;
\end{itemize}

We analyse each case separately. 

\subsection{Even dimensions, not projective}
First suppose $V \cong V_{n, \lambda}$ where $\lambda \in \kk$. Write $n=2m$. Then we can choose a basis $\{v_1,v_2, \ldots, v_m, w_1, \ldots, w_m\}$ with respect to which the action of $G$ is as follows:

\[\sigma_i(v_j)=v_j \ \qquad i=1,2\]
\[\sigma_1(w_i) = w_i+v_i\]
\[\sigma_2(w_i) = w_i+\lambda v_i+ v_{i-1}\]
where we set $v_0=0$. $V_{n, \infty}$ is obtained from $V_{n,0}$ by swapping the roles of $\sigma_1$ and $\sigma_2$. Note that, since the three modules $V_{n,0}$, $V_{n,1}$ and $V_{n,\infty}$ are linked by a group automorphism, they have isomorphic invariant rings, and in particular $\depth(R^G_{\{1\}})$ is the same for all three modules. So from now on we assume $\lambda \in \kk$ and $\lambda$ is invertible.

Now it is  clear that $\dim(V^G) = m$. Moreover, if $\lambda \neq 1$ then $\dim(V^Q)=m$ for each maximal subgroup of $G$. So we obtain immediately from Equation \ref{ineq} that $\depth(R^G_{\{1\}}) = m$ and this quotient ring is Cohen-Macaulay.

In case $\lambda=1$ we have $\dim(V^{\sigma_1\sigma_2}) =m+1$. So $\dim(R^G_{\{1\}}) =m+1$. We claim that $\depth(R^G_{\{1\}}) = m$, so this quotient is not Cohen-Macaulay. Note that $V_{2,1}$ is not faithful, so we may assume $n \geq 4$.

To see this, consider the basis $\{x_1, \ldots, x_m,y_1, \ldots, y_m\}$ of $V^*$, dual to the given basis of $V$. The action of $G$ on this basis is given by

\[\sigma_i(y_j)=y_j \ \qquad i=1,2\]
\[\sigma_1(x_i) = x_i+y_i\]
\[\sigma_2(x_i) = x_i+y_i+y_{i+1}\]
where $y_{m+1}=0$.

Now we have $$y_m = \Tr^{\langle\sigma_1\sigma_2\rangle}(x_{m-1}) \in I^{\langle\sigma_1\sigma_2\rangle}_{\{1\}}.$$
Moreover $y_m \in \kk[V]^G$ and $y_m \not \in I^G_{\{1\}}$ because $V$ and hence $V^*$ is not projective. Applying Proposition
\ref{ub} with $\mathcal{X} = \{\langle\sigma_1\sigma_2\rangle\} $ and $K$ trivial brings
\[\depth(R^G_{\{1\}}) \leq \dim(R^G_{\langle \sigma_1\sigma_2 \rangle}) = m.\]
Combining this with Equation \ref{ineq} proves the claim.

\subsection{Odd dimensions - negative Heller shift.}

Suppose $V$ is isomorphic to $\Omega^m(\kk)$ where $m<0$ (for $m=0$ this representation is not faithful). Then
we can choose a basis $\{v_1,v_2, \ldots, v_{m+1}, w_1, \ldots, w_m\}$ with respect to which the action of $G$ is as follows:

\[\sigma_i(v_j)=v_j \ \qquad i=1,2\]
\[\sigma_1(w_i) = w_i+v_i\]
\[\sigma_2(w_i) = w_i+v_{i+1}\]

Now its clear that $\dim(V^G) = m+1$. Moreover, $\dim(V^Q)=m+1$ for each maximal subgroup of $G$. So we obtain immediately from Equation \ref{ineq} that $\depth(R^G_{\{1\}}) = m+1$ and this quotient ring is Cohen-Macaulay.

\subsection{Odd dimensions - positive Heller shift.}

Suppose $V$ is isomorphic to $\Omega^m(\kk)$ where $m>0$. Then
we can choose a basis $\{v_1,v_2, \ldots, v_{m}, w_1, \ldots, w_{m+1}\}$ with respect to which the action of $G$ is as follows:

\[\sigma_i(v_j)=v_j \ \qquad i=1,2\]
\[\sigma_1(w_i) = w_i+v_i\]
\[\sigma_2(w_i) = w_i+v_{i-1}\] where we set $v_{m+1}=v_0=0$.

Now it's clear that $\dim(V^G) = m$. Further, $\dim(V^{\sigma_1\sigma_2})= m$ but $\dim(V^{\sigma_i}) = m+1$ for $i=1,2$. So $\dim(R^G_{\{1\}}) = m+1$. 

Assume $m \geq 2$. We claim that $\depth(R^G_{\{1\}})=m$ and this quotient ring is not Cohen-Macaulay.

To see this, consider the basis $\{x_1, \ldots, x_m,y_1, \ldots, y_{m+1}\}$ of $V^*$, dual to the given basis of $V$. The action of $G$ on this basis is given by

\[\sigma_i(y_j)=y_j \ \qquad i=1,2\]
\[\sigma_1(x_i) = x_i+y_i\]
\[\sigma_2(x_i) = x_i+y_{i+1}.\]

 Then we have
\[\Tr^{\langle \sigma_1 \rangle}(x_m) = \Tr^{\langle \sigma_2 \rangle}(x_{m-1})=y_m.\] Moreover $y_m \in \kk[V]^G$ and $y_m \not \in I^G_{\{1\}}$ because $V$ and hence $V^*$ is not projective. Applying Proposition
\ref{ub} with $\mathcal{X} = \{\langle\sigma_1\rangle, \langle \sigma_2\rangle\} $ and $K$ trivial brings
\[\depth(R^G_{\{1\}}) \leq \dim(R^G_{\mathcal{X}}) = \dim(V^{\sigma_1\sigma_2})= m.\]
Combining this with Equation \ref{ineq} proves the claim.

For $m=1$ the ring of invariants is easy to compute: Let $N(x_1) = \prod_{g \in G}gx_1$. Note that $N(x_1)$ is $x_1^4$ modulo the ideal generated by $y_1, y_2$ in $\kk[V]$. So the common zero set of the invariants $y_1, y_2, N(x_1)$ contains only the point zero, so this set is a homogeneous system of parameters for $\kk[V]^G$. But since the product of their degrees is the group order, from a standard fact in invariant theory  we get that  
$\kk[V]^G = \kk[y_1,y_2,N(x_1)]$. In particular, $\kk[V]^G$ is a polynomial ring. Note that the ideal $\kk[V]_+^G \kk[V]$ is generated by $y_1, y_2, x_1^4$. Therefore $1, x_1, x_1^2, x_1^3$ forms a  $\kk $-basis for $\kk [V]/\kk[V]_+^G \kk[V]$.  Since $\Tr^G$ is a $\kk[V]^G$-linear map, it follows that $I^G_{\{1\}}$ is generated by $\Tr^G(1), \Tr^G(x_1), \Tr^G(x_1^2), \Tr^G(x_1^3)$. But $\Tr^G(1)=\Tr^G(x_1)= \Tr^G(x_1^2)=0$ and so $I^G_{\{1\}}$ is a principal ideal, generated by $\Tr^G(x_1^3) = y_1^2y_2+y_1y_2$. Clearly this is a regular element of $\kk[V]^G$.  Therefore by \cite[Theorem~2.1.3]{BrunsHerzog}, $R^G_{\{1\}}$ is Cohen-Macaulay.
 
\subsection{The regular representation}  
Let $V_R$ denote the regular  $\kk G$-module with a  basis $v_1, v_2, v_3,v_4$. The action of $\sigma_1, \sigma_2$ on $V_R$ is given by the permutations  $(1,2)(3,4)$ and $(1,3)(2,4)$, respectively. That is, we have $\sigma_i (v_j)=v_{\sigma_i (j)}$ for $i=1,2$ and $1\le j\le 4$. Note that  $\dim(V^{\sigma_1})=\dim(V^{\sigma_2})=\dim(V^{\sigma_1\sigma_2})=2$ and so by Proposition \ref{dim} we have $\dim(R^G_{\{1\}})=2$. We demonstrate that $\depth (R^G_{\{1\}})=2$ as well. Consider the  basis $x_1, x_2, x_3, x_4$ of $V_R^*$ dual to the given basis of $V_R$. Since the inverse transpose of a matrix representing a product of disjoint tranpositions is equal to itself, the action of $G$ on $V_R^*$ is again given by 
$$\sigma_i (x_j)=x_{\sigma_i (j)} \text { for } i=1,2 \text { and } 1\le j\le 4.$$
Since  $G$ permutes the variables,  $\kk [V_R]^G=\kk [x_1, x_2, x_3, x_4]^G$ is generated as a vector space by orbit sums $o(m)$ of monomials in $\kk [x_1, x_2, x_3, x_4]$. Note that $o(m)\in I^G_{\{1\}}$ for a monomial $m$ if the orbit sum contains four monomials. Otherwise the monomial $m$ has a non-trivial stabilizer and so $o(m)\notin I^G_{\{1\}}$. In this case the orbit sum $o(m)$ contains one or two monomials. We show that $N:=x_1x_2x_3x_4$ and $H:=o(x_1x_2)+o(x_1x_3)+o(x_1x_4)=x_1x_2+x_3x_4+x_1x_3+x_2x_4+x_1x_4+x_2x_3$ form a regular sequence in $R^G_{\{1\}}$. Assume that $N(\sum_t c_to(m_t))\in I^G_{\{1\}}$, where $c_t\in \kk $ and  $ o(m_t)\notin I^G_{\{1\}}$. So we may assume that each $o(m_t)$ contains one or two monomials. Note that the number of monomials in the orbits of $m_t$ and $Nm_t$ are the same and $N\sigma_i (m_t)=\sigma_i(Nm_t)$. Therefore $N(\sum_t c_to(m_t))=\sum_t c_to(Nm_t)$ which is a sum of orbits with one or two monomials. It follows that  $N(\sum_t c_to(m_t))\notin I^G_{\{1\}}$.

Next we show that $H$ is a non-zero divisor in the quotient ring  $R^G_{\{1\}}/NR^G_{\{1\}}=\kk [x_1, x_2, x_3, x_4]^G/(I^G_{\{1\}}, N)$. Note that the ideal $(I^G_{\{1\}}, N)$ is spanned by orbit sums $o(m)$ of monomials where $o(m)$ has 4 monomials or $m$ is divisible by $N$. Assume that $H(\sum_t c_to(m_t))\in (I^G_{\{1\}}, N)$ with $\sum_t c_to(m_t)\notin (I^G_{\{1\}}, N)$. Since $H$ is a homogeneous polynomial and  $(I^G_{\{1\}}, N)$ is a homogeneous ideal, we may assume $\sum_t c_to(m_t)$ is homogeneous as well. Each $m_t$ has two monomials in its orbit (if it has one, then $m_t$ is divisible by $N$), so each $m_t$  is fixed by one of the $\sigma_1, \sigma_2, \sigma_1\sigma_2$. It follows that $x_1, x_2, x_3, x_4$ group in two pairs and the  variables in the same pair appear with the same multiplicity in $m_t$. But $m_t$ is not divisible by $N$ so variables in one of the pairs do not appear at all in $m_t$.   Therefore we may assume that $m_t$ is equal to either $x_1^dx_2^d$, $x_1^dx_3^d$ or $x_1^dx_4^d$ for some positive integer $d$. So we have $H(c_1o(x_1^dx_2^d)+c_2o(x_1^dx_3^d)+c_3o(x_1^dx_4^d))\in (I^G_{\{1\}}, N)$. Note that if $c_1\neq 0$, then $x_1^{d+1}x_2^{d+1}$ appears in $Ho(x_1^dx_2^d)$. But  $x_1^{d+1}x_2^{d+1}$ does not appear in $Ho(x_1^dx_3^d)$ or in $Ho(x_1^dx_4^d)$. So  if $c_1\neq 0$, then $x_1^{d+1}x_2^{d+1}$ appears in 
  $H(c_1o(x_1^dx_2^d)+c_2o(x_1^dx_3^d)+c_3o(x_1^dx_4^d))$. But $x_1^{d+1}x_2^{d+1}$ is fixed by $\sigma_1$ and it so it has two monomials in its orbit and we get that it does not appear in a polynomial in $(I^G_{\{1\}}, N)$. This gives $c_1=0$. Along the same lines one sees that $c_2=0$ and $c_3=0$. 
  \subsection{Decomposable Representations}
   We let $V_1 \oplus V_2$ denote the direct sum of the
 $\kk G$-modules $V_1$ and $V_2$,  and let $kV$ denote the direct sum of $k$ copies of a $\kk G$-module $V$.  Assume that $V$ is isomorphic to $c_{n,\lambda}V_{n, \lambda}+d_m \Omega^m(\kk)$. Set $c=\sum c_{n, 1}$ and $d=\sum_{m>0}d_m$. We prove that the Cohen-Macaulay defect of $R^G_{\{1\}}$ is bounded above by $\max \{c, d\}$.
 \begin{prop}
 Assume the notation of the previous paragraph. Then we have $\dim (R^G_{\{1\}})- \depth (R^G_{\{1\}})\le \max \{c, d\}$. In particular, if $V$ is a direct sum of modules of the form $V_{n,\lambda}$ with $\lambda\neq 1$ and  $\Omega^m(\kk)$ with $m<0$, then $R^G_{\{1\}}$ is Cohen-Macaulay.
 \end{prop}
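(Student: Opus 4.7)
The plan is to combine the lower bound $\depth(R^G_{\{1\}})\ge \dim(V^G)$ supplied by Proposition \ref{regularsequence} with the dimension formula $\dim(R^G_{\{1\}})=\max_{1<Q<G}\dim(V^Q)$ coming from Proposition \ref{dim}, and then evaluate that maximum by reading off $\dim(V^Q)-\dim(V^G)$ one summand at a time, using the case analyses already carried out in Sections 5.1--5.3. The essential ingredient is that fixed-point spaces distribute over direct sums, so for any maximal subgroup $Q$ of $G$ we have
\[\dim(V^Q)-\dim(V^G)=\sum_{i}\bigl(\dim(W_i^Q)-\dim(W_i^G)\bigr)\]
whenever $V=\bigoplus_i W_i$ is decomposed into indecomposable summands.

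Essentially all of the required work has already been done. For $W=V_{n,\lambda}$ with $\lambda\ne 1$, and for $W=\Omega^{m}(\kk)$ with $m\le 0$, the difference $\dim(W^Q)-\dim(W^G)$ vanishes for every proper subgroup $Q<G$. For $W=V_{n,1}$ the difference equals $1$ when $Q=\langle\sigma_1\sigma_2\rangle$ and $0$ otherwise. For $W=\Omega^{m}(\kk)$ with $m>0$ the difference equals $1$ when $Q=\langle\sigma_i\rangle$ for $i=1,2$ and $0$ when $Q=\langle\sigma_1\sigma_2\rangle$. Summing these contributions across the indecomposable summands of $V$ gives
\[\dim(V^{\sigma_1\sigma_2})-\dim(V^G)=c,\qquad \dim(V^{\sigma_i})-\dim(V^G)=d \ \ (i=1,2),\]
and hence $\dim(R^G_{\{1\}})=\dim(V^G)+\max\{c,d\}$.

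Sandwiching $\depth(R^G_{\{1\}})$ between $\dim(V^G)$ and $\dim(V^G)+\max\{c,d\}$ immediately yields the claimed inequality $\dim(R^G_{\{1\}})-\depth(R^G_{\{1\}})\le \max\{c,d\}$ on the Cohen--Macaulay defect. The ``in particular'' statement then follows at once, since under its hypotheses $c=d=0$, forcing the upper and lower bounds for $\depth(R^G_{\{1\}})$ to coincide.

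There is no serious obstacle here beyond bookkeeping: all of the non-trivial content has been packaged into the preceding subsections. The only point that requires a moment of care is the treatment of a handful of borderline modules (the non-faithful $V_{2,1}$, and the modules $V_{n,0},V_{n,\infty}$ which are linked to $V_{n,1}$ by a group automorphism), but these slot into the general pattern of Sections 5.1--5.3 without incident.
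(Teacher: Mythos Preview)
Your proposal is correct and follows essentially the same route as the paper: compute $\dim(V^{Q})-\dim(V^{G})$ summand by summand for each maximal subgroup $Q$, deduce $\dim(R^{G}_{\{1\}})=\dim(V^{G})+\max\{c,d\}$ via Proposition~\ref{dim}, and then invoke the lower bound $\depth(R^{G}_{\{1\}})\ge\dim(V^{G})$ from Proposition~\ref{regularsequence}. The paper's proof is slightly terser (it does not single out the borderline cases you flag at the end), but the argument is the same.
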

 \begin{proof}
 Note that the dimensions of the fixed point spaces
 of $\sigma_i$ and $G$ are equal in all summands except $\Omega^m(\kk)$ for $m>0$ and $i=1,2$. Furthermore $\dim (\Omega^m(\kk)^{\sigma_i})=\dim (\Omega^m(\kk)^{G})+1$ for  $m>0$ and $i=1,2$. It follows that $\dim (V^{\sigma_i})=\dim (V^G)+d$ for $i=1,2$. On the other hand the dimensions of the fixed point spaces
 of $\sigma_1\sigma_2$ and $G$ are equal in all summands except $V_{n,1}$, and $\dim (V_{n,1}^{\sigma_1 \sigma_2})=\dim (V_{n,1}^G)+1$. Therefore $\dim (V^{\sigma_1\sigma_2})=\dim (V^G)+c$. So from Proposition \ref{dim} we get $\dim (R^G_{\{1\}})=\dim (V^G)+\max \{c, d\}$. Since the depth of  $R^G_{\{1\}}$ is at least $\dim (V^G)$ by Proposition \ref{regularsequence}, the result follows.
\end{proof}  
\bibliographystyle{plain}
\bibliography{MyBib}

\end{document}